\documentclass[10pt]{amsart}
\usepackage{latexsym} 
\usepackage{amsmath}
\usepackage{amssymb}
\usepackage{amsthm}
\usepackage{amsrefs}
\usepackage{color}
\usepackage{dsfont}
\usepackage{mathrsfs}
\usepackage{caption}
\usepackage{subcaption}
\usepackage{mathtools}
\usepackage{stmaryrd}
\usepackage[all]{xy}
\usepackage[mathcal]{eucal}
\usepackage{verbatim}  
\usepackage{geometry}
\geometry{, , margin=1.25in}

\theoremstyle{plain}
  \newtheorem{theorem}{Theorem}[section]
  \newtheorem{proposition}[theorem]{Proposition}
  \newtheorem{lemma}[theorem]{Lemma}
  \newtheorem{corollary}[theorem]{Corollary}
  
  \newtheorem{question}[theorem]{Question}
\theoremstyle{definition}
  
  \newtheorem{example}[theorem]{Example}
 \theoremstyle{remark}
  \newtheorem{remark}[theorem]{Remark}

\numberwithin{equation}{section}

\def\FF{{\mathbb F}}
\def\ZZ{{\mathbb Z}}
\newcommand{\F}{\mathbb{F}}

\def\CCC{{\mathcal C}}

\def\maj{{\operatorname{maj}}}
\def\inv{{\operatorname{inv}}}
\def\cdes{{\operatorname{cdes}}}
\def\wt{{\operatorname{wt}}}
\def\stat{{\operatorname{stat}}}

\def\symm{{\mathfrak{S}}}

\def\zero{{\mathbf{0}}}


\begin{document}

\title[Problem 6: Cyclic Sieving for cyclic codes]
{ Cyclic Sieving for cyclic codes}

\author{Alex Mason}
\address{Department of Mathematics, University of Washington, 
Seattle WA 98105, USA}
\email{acmason@uw.edu}

\author{Victor Reiner}
\address{School of Mathematics, University of Minnesota, 
Minneapolis, MN 55455, USA}
\email{reiner@umn.edu}

\author{Shruthi Sridhar}
\address{Department of Mathematics, Princeton University, 
Princeton NJ 08544}
\email{ssridhar@math.princeton.edu}

\subjclass{05E18, 94Bxx}

\thanks{This research was carried out as part of the 2017 REU program at the School of Mathematics
at University of Minnesota, Twin Cities. The authors are grateful for the support of NSF RTG
grant DMS-1148634.}


\date{\today}

\begin{abstract} 
Prompted by a question of Jim Propp, this paper examines the cyclic sieving phenomenon (CSP) in certain cyclic codes.  For example, it is shown that, among
dual Hamming codes over $\FF_q$, the generating function for codedwords according to the major index statistic (resp. the inversion statistic) gives rise to a CSP when $q=2$ or $q=3$ (resp. when $q=2$).  A byproduct is a curious characterization of the irreducible polynomials in $\FF_2[x]$
and $\FF_3[x]$ that are primitive.
\end{abstract}

\maketitle

\section{Introduction}

The Cyclic Sieving Phenomenon describes the following enumerative situation.  One has a finite set $X$ having the action of a cyclic group $C=\langle c \rangle=\{1,c,c^2,\ldots,c^{n-1}\}$ 
of order $n$, and a polynomial $X(t)$ in $\ZZ[t]$ that not only satisfies $\#X=X(1)$, but furthermore every element $c^d$ in $C$ satisfies
$$
\#\{x \in X: c^d(x)=x\} = \left[ X(t) \right]_{t=\left(e^{\frac{2\pi i}{n}}\right)^d}.
$$
In this case, one says that the triple $(X,X(t),C)$ {\it exhibits the cyclic sieving phenomenon (CSP)}; see
\cite{RStantonWhite} for background and many examples.
Frequently the polynomial $X(t)$ is a {\it generating function} 
$$
X^{\stat}(t):=\sum_{x \in X} t^{\stat(x)}
$$
for some combinatorial statistic $X \overset{\stat}{\longrightarrow} \{0,1,2,\ldots\}$.  Some of the first examples of CSPs
(e.g., Theorem~\ref{general-word-CSP} below)
arose for the cyclic $n$-fold rotation action on
certain special collections $X$ of words
$w=(w_1,\ldots,w_n)$ of length $n$ in a linearly ordered
alphabet, with $X(q)=X^{\maj}(t)$ or $X^{\inv}(t)$ being generating functions for the
{\it major index} and 
{\it inversion number} statistics, defined as follows:
\begin{equation}
\begin{aligned}
\inv(w)&:=\#\{(i,j): 1 \leq i < j \leq n: w_i > w_j\},\\
    \maj(w)&:=\sum_{i: w_i > w_{i+1}} i.
\end{aligned}
\end{equation}

This prompted Jim Propp to ask the question \cite{Propp}
of whether there are such CSPs in which $X$ is the
set of codewords $\CCC$ for a cyclic error-correcting code.  He initially observed the following instances of CSP triples $(X,X^\stat(t),C)$ where $X=\CCC$ is
a cyclic code inside $\FF_q^n$, and $C=\ZZ/n\ZZ$ acts as $n$-fold cyclic rotation of words, and   either\footnote{There is a subtlety here: one must choose a linear order arbitrarily on $\FF_q$ 
to define the statistics $\maj, \inv$.  However, it turns out that none of the results that we discuss here, including Propp's observations, will depend upon this choice.} $\stat=\maj$ or $\stat=\inv$: 
\begin{itemize}
    \item all repetition codes (trivially),  
    \item the full codes $\CCC=\FF_q^n$, 
    \item all parity check codes, and 
    \item all binary cyclic codes of length $7$.
\end{itemize}
After a quick review of cyclic codes in Section~\ref{codes-preliminaries-section},
a few simple observations about CSPs for cyclic actions on words 
in Section~\ref{CSP-preliminaries-section} will explain
all of the above CSPs, and a few more.

Section~\ref{Hamming-section} addresses the more subtle
examples of {\it dual Hamming codes}.  Among other things, it shows that either $X^\maj(t)$ or $X^\inv(t)$ give
rise to a CSP for all {\it binary} dual Hamming codes,
while $X^\maj(t)$ also works for all {\it ternary} Hamming codes.  The analysis
leads to a curious characterization 
(Theorem~\ref{primitive-polynomial-conjecture}(ii)) of which irreducible polynomials
in $\FF_2[x]$ or $\FF_3[x]$ are {\it primitive} polynomials.

\section{Preliminaries}
\label{codes-preliminaries-section}

We  briefly review here the notions of linear codes, cyclic codes, and the examples that we will consider; see, e.g., Garrett
\cite{Garrett-error-correcting-codes} or Pless \cite{Pless} for more background.
Recall an {\it $\FF_q$-linear code} of length $n$ is an $\FF_q$-linear 
subspace $\CCC \subseteq \FF_q^n$.  One calls $\CCC$ {\it cyclic} if it is also stable under the action of the cyclic group $C=\{e,c,c^2,\ldots, c^{n-1} \} \cong \ZZ/n\ZZ$ whose generator $c$ cyclically shifts codewords $w$ as follows:
$$
c(w_1,w_2,\ldots,w_n):=(w_2,w_3,\ldots,w_n,w_1).
$$
It is convenient to rephrase this using the $\FF_q$-vector space isomorphism 
\begin{equation}
\label{codeword-to-polynomial-dictionary}
\begin{array}{rcl}
\FF_q^n & \longrightarrow & \FF_q[x]/(x^n-1) \\
w=(w_1,\ldots,w_n) & \longmapsto & w_1+w_2x+w_3x^2+\cdots+w_nx^{n-1}.
\end{array}
\end{equation}
After identifying a code $\CCC \subset \FF_q^n$ with its image under the isomorphism in \eqref{codeword-to-polynomial-dictionary}, the $\FF_q$-linearity of $\CCC$ together with cyclicity means that $\CCC$ forms an {\it ideal} within the principal ideal ring $\FF_q[x]/(x^n-1)$.  Hence $\CCC$ is the set $(g(x))$ of all multiples of some {\it generating polynomial} $g(x)$.   This means that
$$
\CCC= \{ h(x)g(x) \in  \FF_q[x]/(x^n-1): \deg(h) + \deg(g) < n \}
$$
and therefore 
$
k:=\dim_{\FF_q} \CCC=n-\deg(g(x)).
$ 
The {\it dual code} $\CCC^\perp$ of a linear code $\CCC$
in $\FF_q^n$ is defined as
$$
\CCC^\perp:=\{ v \in \FF_q^n: 0=v \cdot w = \sum_{i=1}^n v_i w_i\}.
$$
One has that $\CCC$ is cyclic with generator $g(x)$ if and only if $\CCC^\perp$ is cyclic with generator $
g^\perp(x):=\frac{x^n-1}{g(x)},
$
called the {\it parity check polynomial} for the primal code $\CCC$.  This implies 
$
k=\dim_{\FF_q} \CCC=\deg(g^\perp(x)).
$

\begin{example}
The cyclic code $\CCC$ having $g^\perp(x)=1+x+x^2+\cdots+x^{n-1}$ is called the {\it parity check} code of length $n$
(particularly when $q=2$). As a vector space, it is the space of all vectors in $\FF_q^n$ with coordinate sum 0. Its dual code $\CCC^\perp$ consisting of the scalar multiples of  $g^\perp(x)=1+x+x^2+\cdots+x^{n-1}$ is the {\it repetition code}.
For example, the ternary ($q=3$) repetition code $\CCC^\perp$ and
parity check code $\CCC$ of length $n=2$, and their respective generator polynomials $g^\perp(x), g(x)$ inside $\FF_3[x]/(x^2-1)$, are 
$$
\begin{array}{rcr}
\CCC^\perp
  =\{[0,0],[1,1],[2,2]\},
  &\text{ generated by }& g^\perp(x)=1+x,\\
\CCC
 =\{[0,0],[1,2],[2,1]\},
  &\text{ generated by }& g(x)=\frac{x^2-1}{1+x}=1+2x.
\end{array}
$$
\end{example}

\begin{example}
Recall that a degree $k$ polynomial $f(x)$ in $\FF_q[x]$ is called {\it primitive} if it is not only irreducible, but also has the property that
the image $\bar{x}$ of the variable $x$ in the finite field $\FF_q[x]/(f(x))$ has the maximal possible multiplicative order, namely $n:=q^k-1$.
Equivalently, $f(x)$ is primitive when it is irreducible 
but divides none of the polynomials $x^d-1$ for proper divisors $d$ of $n$.

A cyclic code $\CCC$ generated by a primitive polynomial $g(x)$ in $\FF_q[x]$ of degree $k$ is called a {\it Hamming code} of
length $n=q^k-1$ and dimension $n-k$.  Its dual $\CCC^\perp$ generated by $\frac{x^n-1}{g(x)}$ 
is a {\it dual Hamming code} of length $n$ and dimension $k$.
See Example~\ref{dual-Hamming-example} below for some
examples with $q=3$ ({\it ternary} codes) with $k=2$
and length $n=3^2-1=8$.
\end{example}

Propp suggested looking for CSPs $(X,X(t),C)$ with $X$ a Hamming or dual Hamming code, using $X(t)=X^\maj(t)$ or $X^\inv(t)$.
This seems not to happen often for Hamming codes, but 
Section~\ref{Hamming-section} exhibits many dual Hamming codes with such a CSP.

\section{Preliminaries on CSPs for words}
\label{CSP-preliminaries-section}

We first explain why the CSPs for full codes $\FF_q^n$ and parity check codes are special cases of a general CSP for words, Theorem~\ref{general-word-CSP} below, which follows from a result in \cite{RStantonWhite}; see \cite[Prop. 17]{BergetEuR}.  

Let $A$ be a linearly ordered alphabet,
and consider the set $A^n$ of all words $w=(w_1,w_2,\ldots,w_{n-1},w_n)$ of length
$n$ in the alphabet $A$.  As before,
let the cyclic group $C=\{1,c,c^2,\ldots,c^{n-1}\}$ act on $A^n$ via $n$-fold rotation,
so $c(w)=(w_n,w_1,w_2,\ldots,w_{n-1})$.

\begin{theorem}
\label{general-word-CSP}
Let $X \subseteq A^n$ by any collection of words which is
stable under the symmetric group $\symm_n$ acting on the $n$ positions, 
that is, if $w=(w_1,\ldots,w_n)$ in $X$ then $(w_{\sigma(1)},\ldots,w_{\sigma(n)})$ is also in $X$ for
every $\sigma$ in $\symm_n$.  Then $(X,X^\stat(t),C)$ exhibits the CSP, where either $\stat=\maj$ or 
$\stat=\inv$.
\end{theorem}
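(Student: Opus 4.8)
The plan is to reduce the assertion to the classical cyclic sieving result for the set of rearrangements of a fixed multiset. The key structural observation is that an $\symm_n$-stable collection $X \subseteq A^n$ is precisely a disjoint union of such rearrangement classes. Indeed, stability under permutation of the $n$ positions means that $X$ is a union of $\symm_n$-orbits, and the $\symm_n$-orbit of a word $w$ consists of all words having the same content as $w$ (the same multiplicity $\alpha_a$ of each letter $a \in A$). Writing $X_\alpha$ for the class of all words of content $\alpha$, we thus have $X = \bigsqcup_\alpha X_\alpha$ for some finite set of contents $\alpha$.

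First I would note that each $X_\alpha$ is itself stable under the cyclic rotation group $C$, since rotating a word preserves its content; hence the above is a decomposition of $X$ into $C$-stable pieces. Both sides of the CSP condition are then additive along such a decomposition: for every $d$,
$$
\#\{x \in X : c^d(x) = x\} = \sum_\alpha \#\{x \in X_\alpha : c^d(x)=x\},
\qquad
X^\stat(t) = \sum_\alpha X_\alpha^\stat(t).
$$
Consequently, once each triple $(X_\alpha, X_\alpha^\stat(t), C)$ is known to exhibit the CSP, summing over $\alpha$ and evaluating at the relevant root of unity shows that $(X, X^\stat(t), C)$ does as well. This reduces the theorem to the case of a single rearrangement class $X = X_\alpha$.

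For that case I would invoke two classical facts. By MacMahon's equidistribution theorem, $X_\alpha^\maj(t) = X_\alpha^\inv(t) = \binom{n}{\alpha}_t$, the $t$-multinomial coefficient; since this depends only on the multiplicities $\alpha$, it is independent of the chosen linear order on $A$, which settles the footnoted ambiguity at once. The statement that $(X_\alpha, \binom{n}{\alpha}_t, C)$ exhibits the CSP under cyclic rotation is exactly the Reiner--Stanton--White result quoted above (see also \cite[Prop.~17]{BergetEuR}), so no new argument is needed there. The genuine mathematical content thus sits entirely in this cited single-class CSP --- which rests on a $q$-Lucas style evaluation of the $t$-multinomial at roots of unity matched against rotational fixed-point counts --- while everything above it is the bookkeeping that repackages an arbitrary $\symm_n$-stable set as a disjoint union of classes and checks that the CSP survives such unions. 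The only point demanding care is that both the fixed-point count and the statistic generating function split additively along the \emph{same} $C$-stable decomposition, which they manifestly do.
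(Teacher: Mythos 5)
Your proposal is correct and takes essentially the same route the paper intends: the paper gives no written proof, simply asserting that the theorem ``follows from a result in [Reiner--Stanton--White]; see [Berget--Eu--Reiner, Prop.~17],'' and your reduction of an $\symm_n$-stable set to a disjoint union of $C$-stable rearrangement classes, combined with MacMahon equidistribution and the cited single-class CSP, is precisely the standard deduction being invoked.
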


\noindent
Note that Theorem~\ref{general-word-CSP} 
explains Propp's observation of CSP triples
involving either the full codes $\CCC=\FF_q^n$ or the 
parity check codes 
$
\CCC=\{w \in \FF_q^n: \sum_{i=1}^n w_i = 0 \},
$
since both are $\symm_n$-stable inside $\FF_q^n$.

\medskip

The next proposition analyzes how $\maj(w)$ changes\footnote{A much 
more sophisticated analysis may be found in Ahlbach and Swanson \cite{AhlbachSwanson}.}
when applying the cyclic shift $c$ to the word $w$,
and similarly for $\inv(w)$ if the alphabet $A$ is binary.  In the latter case, we assume $A=\{0,1\}$ has linear order $0<1$, and will refer to the
{\it Hamming weight}  $\wt(w)$, as the number of ones in $w$.  We also use another statistic on words $w$ in $A^n$, 
the number of {\it cyclic descents}
$$
\cdes(w):=\#\{i: 1 \leq i \leq n \text{ and } w_i > w_{i+1}\}
$$
where we decree $w_{n+1}:=w_1$ to understand the inequality $w_i > w_{i+1}$ when $i=n$.  Lastly, define a {\it $t$-analogue} of the number $n$
by this geometric series:
$
[n]_t:=1+t+t^2+\cdots+t^{n-1}=\frac{t^n-1}{t-1}.
$
The following proposition is then straightforward to check from the definitions.

\begin{proposition}
\label{maj-inv-change-under-shift-prop}
Let $A$ be any linearly ordered alphabet, and $w$ a word in $A^n$.
\begin{enumerate}
    \item[(i)] The statistic $\cdes(w)$ is constant among all words within the $C$-orbit of $w$, and
    $$
    \begin{aligned}
    \maj(c(w))&=
    \begin{cases} 
    \maj(w)+\cdes(w) & \text{ if }w_n \leq w_1, \\
    \maj(w)+\cdes(w)-n & \text{ if }w_n > w_1 
    \end{cases}\\
    &\equiv \maj(w)+\cdes(w)\, \bmod{n}.
    \end{aligned}
    $$
    \item[(ii)] In the binary case $A=\{0,1\}$, one has
    $$
    \begin{aligned}
    \inv(c(w))&=
    \begin{cases}
    \inv(w)+\wt(w) & \text{ if }w_n=1,\\
    \inv(w)+\wt(w)-n & \text{ if }w_n=0 
    \end{cases}\\
    &\equiv \inv(w)+\wt(w)\, \bmod{n}.
    \end{aligned}
    $$
    
\end{enumerate}
\end{proposition}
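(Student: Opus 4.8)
The plan is to prove both statements by a direct accounting of how individual descents (in part (i)) and individual inversions (in part (ii)) are relocated when the cyclic shift $c$ moves the last letter $w_n$ to the front. First I would fix convenient notation: reading indices modulo $n$ with $w_{n+1}:=w_1$, let $d_i$ denote the indicator of the comparison $w_i>w_{i+1}$, so that $\cdes(w)=\sum_{i=1}^{n}d_i$ records all $n$ cyclic comparisons while $\maj(w)=\sum_{i=1}^{n-1} i\,d_i$ records only the $n-1$ non-wrapping ones. For the invariance assertion in (i), I would then observe that $c$ merely rotates this cycle of comparisons: each pair $(w_i,w_{i+1})$ reappears as a cyclically adjacent pair of $c(w)$, so the multiset of comparisons is unchanged and hence so is the number of descending ones. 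This gives $\cdes(c(w))=\cdes(w)$, and iterating shows $\cdes$ is constant on the entire $C$-orbit.

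For the $\maj$ formula I would compute $\maj(c(w))$ straight from the indicators $d_i$. Under $c$ every comparison's position changes by a fixed amount, so after re-indexing the sum the positions contributing to $\maj$ shift uniformly, producing a bulk change of $+\cdes(w)$; the only subtle contribution is the single comparison that crosses the boundary of the recorded range $\{1,\dots,n-1\}$, which enters or leaves that range and therefore corrects the total by an integer multiple of $n$. Isolating this one boundary comparison should yield the two displayed cases, distinguished by the relevant boundary inequality, and in particular the congruence $\maj(c(w))\equiv\maj(w)+\cdes(w)\bmod n$.

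Part (ii) runs in parallel but in the language of inversions. In the binary alphabet $\inv(w)$ is the number of pairs $(i,j)$ with $i<j$, $w_i=1$, and $w_j=0$, and applying $c$ moves the single letter $w_n$ past all the others while preserving the relative order of every pair not involving it. Hence the only change in $\inv$ comes from the $n-1$ pairs formed by the migrating letter $w_n$ with the rest; counting these in terms of $\wt(w)$, the number of ones (which is visibly rotation-invariant), gives the two cases and the congruence $\inv(c(w))\equiv\inv(w)+\wt(w)\bmod n$.

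The step I expect to be the main obstacle is pinning down the boundary term exactly: one must identify precisely which lone comparison, respectively which pairing with the migrating letter, is responsible for the $\pm n$ correction, and under exactly which inequality it fires. This is the one place where a reversed inequality or an off-by-one in the re-indexing would corrupt the exact two-case formula, even though the congruence modulo $n$ is robust enough to survive such a slip; so I would verify the boundary case separately against a small example to be sure the inequality is oriented correctly.
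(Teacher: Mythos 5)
Your approach is the right one, and it is essentially the paper's: the authors give no argument, declaring the proposition ``straightforward to check from the definitions,'' and the direct bookkeeping you describe --- rotating the cycle of $n$ adjacent comparisons for (i), and tracking only the pairs involving the migrating letter for (ii) --- is exactly that check. Your proof that $\cdes$ is constant on $C$-orbits is complete, and your derivation of the congruences is sound: writing $\maj(w)=\sum_{i=1}^{n}(i\bmod n)\,d_i$ with $d_i:=[w_i>w_{i+1}]$ and indices cyclic, the shift replaces $d_i$ by $d_{i-1}$, so $\maj(c(w))\equiv\sum_j (j+1)d_j=\maj(w)+\cdes(w)\pmod n$; this congruence (and its $\inv$ analogue up to sign) is all that the rest of the paper uses.

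Your instinct that the boundary term is the danger spot is not merely prudent --- carrying out your own small-example test shows that the exact two-case formulas as printed do not survive it. With the convention $c(w)=(w_n,w_1,\dots,w_{n-1})$ stated in this section, the comparison that leaves the recorded range $\{1,\dots,n-1\}$ is the one at position $n-1$ of $w$ (it moves to position $n$), while the cyclic comparison $d_n$ enters at position $1$; the correction is therefore $-n[w_{n-1}>w_n]$, not $-n[w_n>w_1]$. Concretely, $w=(0,0,1)$ has $\maj(w)=0$, $\cdes(w)=1$, and $\maj(c(w))=\maj(1,0,0)=1$, whereas the printed formula predicts $0+1-3=-2$. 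Similarly for (ii): moving a final $1$ to the front creates $n-\wt(w)$ new inversions and moving a final $0$ destroys $\wt(w)$ of them, so under this convention $\inv(c(w))\equiv\inv(w)-\wt(w)\pmod n$; the printed values $+\wt(w)$ and $+\wt(w)-n$ are the correct ones for the opposite shift $c(w)=(w_2,\dots,w_n,w_1)$ of Section 2, but with the cases governed by $w_1=0$ and $w_1=1$ respectively (try $w=(0,1,0)$ against the printed version). None of this affects the applications, since only $\gcd(n,\cdes(w))$ and $\gcd(n,\wt(w))$ enter and $\gcd(n,k)=\gcd(n,-k)$; but to finish your proof you must fix one shift direction, state the boundary case for that direction, and expect your corrected formulas to disagree with the proposition as printed.
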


The congruences modulo $n$ in 
Proposition~\ref{maj-inv-change-under-shift-prop}
immediately imply the following.

\begin{proposition}
\label{maj-inv-on-free-orbits-prop}
When $w$ in $A^n$ has free $C$-orbit, meaning that
$\{w,c(w),c^2(w),\ldots,c^{n-1}(w)\}$ are all distinct, then
one has the following congruence in $\ZZ[t]/(t^n-1)$:
$$
X^\maj(t) \equiv t^{\maj(w)} \cdot [n]_{t^{\cdes(w)}} \, \bmod{t^n-1}.
$$
In the binary case, one has
$$
X^\inv(t) \equiv t^{\inv(w)} \cdot [n]_{t^{\wt(w)}} \, \bmod{t^n-1}.
$$
\end{proposition}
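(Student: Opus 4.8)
The plan is to work entirely inside the quotient ring $\ZZ[t]/(t^n-1)$, where the single most useful feature is that $t^n \equiv 1$, so that $t^a \equiv t^b$ whenever $a \equiv b \bmod n$; in other words, exponents of $t$ may be reduced freely modulo $n$. The first step is to pin down the sum $X^\maj(t)=\sum_{x \in X} t^{\maj(x)}$: here $X$ is the $C$-orbit of $w$, and the hypothesis that this orbit is free guarantees that the $n$ words $w,c(w),\ldots,c^{n-1}(w)$ are pairwise distinct. Hence summing over the distinct elements of $X$ is the same as summing over the $n$ shifts, and I may write $X^\maj(t)=\sum_{j=0}^{n-1} t^{\maj(c^j(w))}$ with no overcounting.

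Next I would track the exponents. Proposition~\ref{maj-inv-change-under-shift-prop}(i) records both that $\cdes$ is constant along the $C$-orbit and that $\maj(c(w)) \equiv \maj(w)+\cdes(w) \bmod n$. Iterating this congruence, and invoking the orbit-invariance of $\cdes$ at each step, yields $\maj(c^j(w)) \equiv \maj(w)+j\,\cdes(w) \bmod n$ for every $j$; a routine induction on $j$ makes this precise. Combining the two steps and passing to $\ZZ[t]/(t^n-1)$, so that the mod-$n$ congruences on exponents become genuine equalities, I obtain
\begin{align*}
X^\maj(t) &= \sum_{j=0}^{n-1} t^{\maj(c^j(w))}
\equiv \sum_{j=0}^{n-1} t^{\maj(w)+j\,\cdes(w)} \\
&= t^{\maj(w)} \sum_{j=0}^{n-1} \left( t^{\cdes(w)} \right)^{j}
= t^{\maj(w)} \cdot [n]_{t^{\cdes(w)}} \,\bmod{t^n-1},
\end{align*}
where the last equality is just the definition of the $t$-analogue $[n]_t$ with $t$ replaced by $t^{\cdes(w)}$. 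The binary statement is proved identically, substituting Proposition~\ref{maj-inv-change-under-shift-prop}(ii) for (i), $\inv$ for $\maj$, and $\wt$ for $\cdes$ throughout.

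There is no real obstacle here: as the surrounding text anticipates, the congruences of the previous proposition do almost all of the work. The only point requiring a moment's care is the bookkeeping in the very first step, since the freeness hypothesis is exactly what is needed to identify the orbit-indexed sum $\sum_{x \in X}$ with the shift-indexed sum $\sum_{j=0}^{n-1}$; without it the left-hand side would in general have strictly fewer than $n$ terms, and the clean geometric-series form would fail.
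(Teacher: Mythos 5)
Your proposal is correct and is exactly the argument the paper intends: the paper gives no separate proof, simply noting that the congruences of Proposition~\ref{maj-inv-change-under-shift-prop} ``immediately imply'' the result, and your iteration of $\maj(c^j(w)) \equiv \maj(w)+j\,\cdes(w) \bmod n$ followed by summing the geometric series over the free orbit is precisely that omitted computation.
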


The next corollary then explains Propp's observation about
CSPs for binary cyclic codes $X=\CCC$ of length $n=7$ using either $X^\maj(t)$ or $X^\inv(t)$. The key point is that $7$ is prime.  We will also frequently use the fact that the following three conditions are equivalent for positive integers $k, n$:
\begin{itemize}
    \item $\gcd(k,n)=1$.
    \item $t^\ell [n]_{t^k} \equiv [n]_t \, \bmod{t^n-1}$ for all nonnegative integers $\ell$.
    \item $t^\ell [n]_{t^k}$ vanishes upon evaluating $t$ at any $n$th root-of-unity that is not $1$.
\end{itemize}

\begin{corollary}
\label{n-prime-corollary}
When $n$ is prime, every $C$-stable subset $X \subset A^n$
gives rise to a CSP triple $(X,X^\maj(t),C)$.  If furthermore,
$A=\{0,1\}$, then one also has the CSP triple $(X,X^\inv(t),C)$.
\end{corollary}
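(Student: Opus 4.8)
The plan is to decompose $X$ into $C$-orbits and analyze the contribution of each orbit to $X^\maj(t)$ modulo $t^n-1$, exploiting the primality of $n$. Since $n$ is prime, every element of $C=\ZZ/n\ZZ$ other than the identity generates all of $C$, so for each codeword $w\in X$ the orbit $\{w,c(w),\ldots,c^{n-1}(w)\}$ is either free (all $n$ elements distinct) or is a singleton (a fixed point, $c(w)=w$). The constant words are the only fixed points, and they form a $C$-stable subset; in fact the fixed-point count for any $c^d$ with $1\le d\le n-1$ equals the number of constant words in $X$, since $n$ prime forces $c^d$ to generate $C$.

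First I would treat the free orbits. Summing the generating function over a single free $C$-orbit $O$ of a word $w$, Proposition~\ref{maj-inv-on-free-orbits-prop} gives
\[
\sum_{x\in O} t^{\maj(x)} \equiv t^{\maj(w)}\cdot[n]_{t^{\cdes(w)}} \bmod{t^n-1}.
\]
Because $n$ is prime and $0\le\cdes(w)<n$, the value $\cdes(w)$ is either $0$ or coprime to $n$. If $\cdes(w)\ne 0$, then $\gcd(\cdes(w),n)=1$, so by the displayed equivalence preceding the corollary this contribution reduces to $t^{\maj(w)}[n]_t\equiv[n]_t$, which vanishes at every $n$th root of unity $\zeta\ne 1$. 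The case $\cdes(w)=0$ cannot occur for a word on a free orbit that is nonconstant, since a word with no cyclic descents must be weakly increasing cyclically, forcing it to be constant; I would verify this quick combinatorial fact carefully, as it is the one place a nonconstant word could slip through.

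Next I would assemble the CSP verification at an arbitrary $n$th root of unity $\zeta=e^{2\pi i d/n}$ with $1\le d\le n-1$. The free orbits each contribute $0$ to $X^\maj(\zeta)$ by the previous paragraph, while each constant word $w$ (a fixed point) contributes $t^{\maj(w)}=t^0=1$ evaluated at $\zeta$, namely $1$, since constant words have $\maj=0$. Hence $X^\maj(\zeta)$ equals the number of constant words in $X$, which is exactly $\#\{x\in X:c^d(x)=x\}$; together with the trivial check $X^\maj(1)=\#X$, this establishes the CSP for $\stat=\maj$. For the binary case $A=\{0,1\}$ with $\stat=\inv$, the argument is identical after replacing $\cdes(w)$ by $\wt(w)$ and using the second congruence of Proposition~\ref{maj-inv-on-free-orbits-prop}: a nonconstant binary word on a free orbit has $1\le\wt(w)\le n-1$, hence $\gcd(\wt(w),n)=1$, so its orbit contribution again vanishes at $\zeta\ne 1$.

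The main obstacle I expect is not any deep computation but rather the clean bookkeeping of the two degenerate statistic values ($\cdes=0$ and $\wt\in\{0,n\}$): I must confirm that these force the word to be constant, so that the only surviving contributions at $\zeta\ne 1$ come from genuine fixed points. Once primality guarantees that every nonconstant word sits on a free orbit with statistic coprime to $n$, the root-of-unity vanishing is automatic and the fixed-point count falls out immediately.
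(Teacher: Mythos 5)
Your proposal is correct and follows essentially the same route as the paper: decompose $X$ into singleton orbits (constant words) and free orbits, apply Proposition~\ref{maj-inv-on-free-orbits-prop} to each free orbit, and use primality of $n$ to conclude $\gcd(n,\cdes(w))=1$ (resp.\ $\gcd(n,\wt(w))=1$) for nonconstant $w$, so each free orbit's contribution vanishes at nontrivial $n$th roots of unity. Your extra care in checking that $\cdes(w)=0$ or $\wt(w)\in\{0,n\}$ forces $w$ to be constant is exactly the observation the paper states as ``$1\le\cdes(w)\le n-1$ for any non-constant word.''
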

\begin{proof}
Since $n$ is prime, $X$ must consist of a certain number $m$ of singleton $C$-orbits that each consist of a single constant word, along with
a list of free $C$-orbits labeled $X_1,\ldots,X_r$, say with $C$-orbit 
representatives labeled $w^{(1)}, \ldots, w^{(r)}$.
Then for either statistic $\stat=\maj$ or $\stat=\inv$, the polynomial $X^\stat(t)$ satisfies $\#X=X(1)$ by definition.  For $c^d \neq 1$ in $C$, one has 
$$
\#\{x \in X: c^d(x)=x\}=m,
$$
and so it remains to check
that when $c^d \neq 1$ one has
$\left[ X^\stat(t) \right]_{t=\zeta_n^d}=m$ if $\zeta_n:=e^{\frac{2\pi i}{n}}$.
Since 
$$
X^\stat(t)= m + \sum_{i=1}^r X^{\stat}_i(t),
$$
it suffices to show 
for each $i=1,2,\ldots,r$ that $[X_i^{\stat}(t)]_{t=\zeta_n^d}=0$.

Using Proposition~\ref{maj-inv-on-free-orbits-prop}, 
this follows for $\stat=\maj$ since
$$
\begin{aligned}
X_i^{\maj}(t) 
&\equiv t^{\maj(w^{(i)})} \cdot [n]_{t^{\cdes(w^{(i)}}} \, \bmod{t^n-1} \\
&\equiv  [n]_t \, \bmod{t^n-1}
\end{aligned}
$$
where the last congruence above comes from $n$ being prime
and $1 \leq \cdes(w) \leq n-1$ for any non-constant word $w^{(i)}$, so that $\gcd(n,\cdes(w))=1$.  

If $A=\{0,1\}$, it similarly follows using Proposition~\ref{maj-inv-on-free-orbits-prop} for $\stat=\inv$,
as
$$
\begin{aligned}
X_i^{\inv}(t) 
&\equiv t^{\inv(w^{(i)})} \cdot [n]_{t^{\wt(w)}} \, \bmod{t^n-1} \\
&\equiv  [n]_t \, \bmod{t^n-1}.
\end{aligned}.
$$
The last congruence holds as $w^{(i)}$ non-constant
gives $1 \leq \wt(w^{(i)}) \leq n-1$ so $\gcd(n,\wt(w^{(i)}))=1$.
\end{proof}

\begin{remark}
Note that
Proposition~\ref{maj-inv-on-free-orbits-prop} dashes
any false hopes one might have that
for {\it binary} words $w$ in $\{0,1\}^n$, the
distributions
$
\sum_{j=0} t^{\maj(c^j(w))}
$
and 
$
\sum_{j=0} t^{\inv(c^j(w))}
$
are congruent modulo $t^n-1$.
This can fail for non-prime $n$ even when $w$ has a free
$C$-orbit. For example, $w = (1,0,1,1,0,0)$ has 
$$
\sum_{j=0} t^{\maj(c^j(w))} \equiv t^5[6]_{t^2}
\not\equiv t^7[6]_{t^3}
\equiv \sum_{j=0} t^{\inv(c^j(w))}
\, \bmod{t^6-1}.
$$
\end{remark}

Our discussion of dual Hamming codes will use another consequence of 
Proposition~\ref{maj-inv-on-free-orbits-prop}.

\begin{corollary}
\label{single-free-orbit-corollary}
Suppose that a $C$-stable subset $X \subseteq A^n$ of words
has all non-constant words in $X$ lying in a single free $C$-orbit, represented by the word $w$.  
\begin{enumerate}
    \item[(i)] Then
$(X,X^\maj(t),C)$ gives rise to a CSP triple if and only 
if $\gcd(n,\cdes(w))=1$. 
\item[(ii)] In the binary case, $(X,X^\inv(t),C)$ gives rise to a CSP triple if and only 
if $\gcd(n,\wt(w))=1$. 
\end{enumerate}
\end{corollary}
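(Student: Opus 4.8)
The plan is to reduce the CSP condition to a single root-of-unity evaluation and then invoke the equivalence of the three conditions listed immediately before the corollary. First I would decompose $X$ according to its $C$-orbit structure: by hypothesis $X$ consists of some number $m$ of constant words, each forming a singleton $C$-orbit, together with the single free orbit $\{w, c(w), \ldots, c^{n-1}(w)\}$ of size $n$. A constant word has no descents and no inversions, so it contributes $t^{\maj} = t^{\inv} = t^0 = 1$ to the relevant generating function; hence for either statistic one has
\[
X^{\stat}(t) = m + X^{\stat}_{\mathrm{free}}(t),
\]
where $X^{\stat}_{\mathrm{free}}(t)$ sums only over the free orbit.

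Next I would dispatch the two CSP requirements. The equality $\#X = X^{\stat}(1)$ is immediate. For $c^d \neq 1$, the only elements of $X$ fixed by $c^d$ are the $m$ constant words, since a free orbit has no nontrivial stabilizer; thus $\#\{x \in X : c^d(x) = x\} = m$. Because the constant part of $X^{\stat}(t)$ already evaluates to $m$ at every argument, the CSP holds precisely when $[X^{\stat}_{\mathrm{free}}(t)]_{t = \zeta_n^d} = 0$ for every $d$ with $c^d \neq 1$, that is, at every $n$th root of unity $\zeta_n^d \neq 1$.

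Now I would apply Proposition~\ref{maj-inv-on-free-orbits-prop}, which gives
\[
X^{\maj}_{\mathrm{free}}(t) \equiv t^{\maj(w)}\,[n]_{t^{\cdes(w)}} \bmod{t^n - 1}
\]
in case (i), and the analogous congruence with $\inv$ and $\wt$ in case (ii). Since we only ever evaluate at $n$th roots of unity, reducing modulo $t^n - 1$ changes nothing. The whole question thus becomes whether $t^{\maj(w)}\,[n]_{t^{\cdes(w)}}$ vanishes at every $n$th root of unity other than $1$. But this is exactly the third of the three equivalent conditions stated just before the corollary, taken with $k = \cdes(w)$; by that equivalence it holds if and only if $\gcd(n, \cdes(w)) = 1$. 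This proves (i), and the binary case (ii) is identical with $k = \wt(w)$.

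The one point deserving care is the ``only if'' direction, where I must exhibit an honest failure of the CSP when $\gcd(n, \cdes(w)) > 1$. Concretely I would take $d = n/\gcd(n,\cdes(w))$, which lies in the range $1 \leq d \leq n-1$ (so $c^d \neq 1$) and makes $\zeta_n^{d\cdot\cdes(w)} = 1$, whence $[n]_{\zeta_n^{d\cdot\cdes(w)}} = n \neq 0$. For this $d$ one gets $[X^{\stat}(t)]_{t=\zeta_n^d} = m + (\text{nonzero}) \neq m = \#\{x \in X : c^d(x) = x\}$, so the triple is not a CSP. Beyond this bookkeeping there is no real obstacle, as Proposition~\ref{maj-inv-on-free-orbits-prop} and the pre-stated $\gcd$ equivalence carry all the weight.
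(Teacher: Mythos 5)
Your proof is correct and follows essentially the same route as the paper's: decompose $X$ into the constant words plus the single free orbit, reduce the CSP to the vanishing of the free-orbit generating function at nontrivial $n$th roots of unity, and then combine Proposition~\ref{maj-inv-on-free-orbits-prop} with the stated equivalence between $\gcd(k,n)=1$ and the vanishing of $t^\ell[n]_{t^k}$ at those roots. The paper compresses this into two sentences by referring back to the proof of Corollary~\ref{n-prime-corollary}; your explicit treatment of the ``only if'' direction via $d=n/\gcd(n,\cdes(w))$ is a correct unpacking of the same equivalence.
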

\begin{proof}
As in the proof of 
Corollary~\ref{n-prime-corollary}, for either statistic $\stat=\maj$ or $\stat=\inv$, 
the CSP holds if and only if $\sum_{j=0}^{n-1} t^{\stat(c^j(w))}$
vanishes upon setting $t=\zeta_n^d$ 
for any $n^{th}$ root-of-unity $\zeta_n^d \neq 1$.  Since Proposition~\ref{maj-inv-on-free-orbits-prop} implies the above sum is congruent modulo $t^n-1$ to $t^{\maj(w)} [n]_{t^{\cdes(w)}}$ when $\stat=\maj$,
and to $t^{\inv(w)} [n]_{t^{\wt(w)}}$ in the binary case when $\stat=\inv$, the result follows.
\end{proof}

\section{Dual Hamming codes}
\label{Hamming-section}

To understand when dual Hamming codes $\CCC$ exhibit a
CSP, it will help to have many ways to characterize
them among cyclic codes.
As a precursor step, it helps to first characterize the cyclic codes
for which the cyclic action on nonzero codewords is free.

\begin{proposition}
\label{free-cyclic-code-prop}
A cyclic code $\CCC \subset \FF_q^n$ with parity check polynomial $g^\perp(x)$
will have the $C$-action on $\CCC \setminus \{\zero\}$ free if and only if 
$\gcd( g^\perp(x), x^d-1 ) = 1$ for all proper divisors $d$ of $n$.
\end{proposition}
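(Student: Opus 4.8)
The plan is to translate freeness of the $C$-action entirely into commutative algebra inside the ring $R:=\FF_q[x]/(x^n-1)$, and then reduce it to a statement about zero divisors. Under the dictionary \eqref{codeword-to-polynomial-dictionary} the shift $c$ acts on $R$ as multiplication by a fixed power of $x$, so for any codeword $w$ and any $d$ one has $c^d(w)=w$ if and only if $(x^d-1)\,w \equiv 0 \bmod (x^n-1)$. The orbit of a nonzero $w$ has some size $m$ dividing $n$, and is free exactly when $m=n$; if $m<n$ then $m$ is a proper divisor of $n$ with $c^m(w)=w$. Hence the $C$-action on $\CCC\setminus\{\zero\}$ fails to be free if and only if there is a proper divisor $d$ of $n$ and a nonzero $w\in\CCC$ with $(x^d-1)\,w\equiv 0 \bmod (x^n-1)$, and proving the proposition amounts to showing that such a $w$ exists, for a fixed $d$, precisely when $\gcd(g^\perp(x),x^d-1)\neq 1$.

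Next I would pass from $\CCC$ to a more transparent ring. Writing $x^n-1=g(x)\,g^\perp(x)$, the map $\FF_q[x]/(g^\perp(x))\to\CCC$ sending $\bar h\mapsto g(x)\,\bar h$ is a well-defined $\FF_q$-linear isomorphism: it is well-defined because $g\cdot g^\perp\equiv 0$, injective because $g\,h\equiv 0 \bmod (x^n-1)$ forces $g^\perp\mid h$, and surjective onto the principal ideal $\CCC=(g)$. Since this map is multiplication by $g$, it commutes with multiplication by $x^d-1$; therefore a nonzero $w\in\CCC$ killed by $x^d-1$ exists if and only if multiplication by $x^d-1$ on the ring $\FF_q[x]/(g^\perp(x))$ has nonzero kernel. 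This converts the problem into a clean question about a single finite quotient ring.

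Finally I would invoke the standard facts about the finite commutative ring $\FF_q[x]/(g^\perp(x))$: every element is either a unit or a zero divisor, so multiplication by $a:=x^d-1$ fails to be injective exactly when $a$ is a non-unit; and $a$ is a unit modulo $g^\perp(x)$ if and only if $\gcd(x^d-1,g^\perp(x))=1$, by B\'ezout. Chaining these equivalences shows that a nonzero $w$ as above exists for the divisor $d$ if and only if $\gcd(g^\perp(x),x^d-1)\neq 1$, and taking the contrapositive over all proper divisors $d$ of $n$ yields the proposition. I expect the only genuine subtlety to lie in the first paragraph's bookkeeping---checking that the combinatorial notion of a free orbit is faithfully captured by annihilation under some $(x^d-1)$ with $d$ a proper divisor of $n$, and that the module isomorphism really does intertwine the two multiplication-by-$(x^d-1)$ operators; the ring-theoretic core in the last step is routine.
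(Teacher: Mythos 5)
Your proposal is correct and follows essentially the same route as the paper: both reduce freeness to the existence of a nonzero codeword annihilated by $x^d-1$ for a proper divisor $d$ of $n$, and both then convert this, via the factorization $x^n-1=g(x)g^\perp(x)$, into the condition that $g^\perp(x)$ shares a factor with $x^d-1$. The only difference is cosmetic: the paper phrases the last step as the existence of a nonzero $h(x)$ of degree less than $\deg g^\perp$ with $g^\perp(x)\mid (x^d-1)h(x)$, whereas you package the same fact as the unit/zero-divisor dichotomy for $x^d-1$ in the finite ring $\FF_q[x]/(g^\perp(x))$ after identifying $\CCC$ with that ring as a module.
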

\begin{proof}
First note that, since $c^n=1$,
whenever a codeword $w$ in $\CCC$ is fixed by some element $c^d \neq 1$ in $C$, 
without loss of generality, one may assume $d$ is a proper divisor of $n$; otherwise replace $d$ by $\gcd(d,n)$.
When this happens, the polynomial $h(x)g(x)$ representing $w$ in $\FF_q[x]/(x^n-1)$ has
$$
x^d h(x)g(x) \equiv h(x)g(x) \bmod x^n-1
$$
or equivalently
$(x^d-1)h(x)g(x)$ is divisible by $x^n-1$ in $\FF_q[x]$.
Canceling factors of $g(x)$, this is equivalent to $(x^d-1)h(x)$ being divisible by $g^\perp(x)$ in $\FF_q[x]$.
However, as discussed in Section~\ref{codes-preliminaries-section}, 
$h(x)$ can be chosen with degree strictly less than $k=\dim \CCC= \deg(g^\perp(x))$,
so the existence of such a nonzero $h(x)$ is equivalent to $g^\perp(x)$ sharing a common factor with $x^d-1$.  
\end{proof}

The next result compiles various equivalent 
characterizations of the primitive polynomials within $\FF_q[x]$, or equivalently, the dual Hamming codes.
Although many of the equivalences are well-known (see, e.g., Garrett \cite[Chap. 21]{Garrett-crypto}, \cite[Chap. 16]{Garrett-error-correcting-codes}, Klein \cite[Chap. 2]{Klein} for some), we were unable to find a source for all of them in the literature, so we have included the proofs here.
Some of the equivalences involve the {\it linear feedback shift register} (LFSR) associated to a monic polynomial 
$f(x)=a_0+a_1x+a_2x^2+\cdots+a_{k-1}x^{k-1}+x^k$,
which is the $\FF_q$-linear map
$$
\begin{array}{rcl}
\F_q^k &\overset{T_f}{\longrightarrow}& \F_q^k\\
{\bf x}=(x_0, x_1, ... x_{k-1}) &\longmapsto & 
T_f({\bf x})=(x_1, ... x_{k-1}, x_k)
\end{array}
$$
where $x_k:=-\left(a_0 x_{k-1}+a_1 x_{k-2}+\cdots+a_{k-1}x_0\right)$. 
Starting with a {\it seed vector} ${\bf s}=(s_0, s_1, ... s_{k-1})$,
since ${\bf s}$ and $T_f({\bf s})$ overlap in a consecutive subsequence
of length $k-1$, 
it is possible to create an infinite {\it pseudorandom sequence}
$
(s_0,s_1,\ldots,s_{k-1},s_k,s_{k+1},\ldots)
$
containing as its length $k$ 
consecutive subsequences all of the iterates 
$T_f^r ({\bf s})=(s_r,s_{r+1},\ldots,s_{r+k-1})$.

\begin{proposition}
\label{primitive-dual-Hamming-equivalences-prop}
Let $g^\perp(x)$ be any monic irreducible degree $k$ polynomial in $\FF_q[x]$ that divides $x^n-1$, where $n:=q^k-1$.  
Let $\CCC \subset \FF_q^n$ be the $k$-dimensional cyclic code generated by $g(x)=\frac{x^n-1}{g^\perp(x)}$.
Then the following are equivalent:
\begin{enumerate}
    \item[(i)] The $C$-action by $n$-fold cyclic shifts on
    $\CCC\setminus \{{\bf 0}\}$ inside $\FF_q^n$ is
    simply transitive.
    \item[(ii)] $\gcd(g^\perp(x),x^d-1)=1$ for all proper divisors $d$ of $n$.
    \item[(iii)] $g^\perp(x)$ is primitive, that is, $\bar{x}$ has order $n$ in $\FF_q[x]/(g^\perp(x))$, so
    $\CCC$ is dual Hamming.
    \item[(iv)] The linear feedback shift register $T_{g^\perp}: \FF_q^k \rightarrow \FF_q^k$
    has order $n$.
    \item[(v)] With seed ${\bf s}:=(0,\cdots,0,1)$ in $\FF_q^k$, the iterates $\left\{ T_{g^\perp}^r ({\bf s}) \right\}_{r=0,1,\ldots,n-1}$ exhaust $\FF_q^k \setminus \{\bf 0\}$.
    \item[(vi)] The pseudorandom sequence
    generated by $T_{g^\perp(x)}$ with seed ${\bf s}:=(0,\cdots,0,1)$ is $n$-periodic, and each period contains each vector $\FF_q^k \setminus \{\bf 0 \}$ as a consecutive subsequence exactly once.
    \item[(vii)] The codeword $w$ in $\CCC \subset \FF_q^n$ corresponding under \eqref{codeword-to-polynomial-dictionary} to $g(x)$ in $\FF_q[x]/(x^n-1)$, when repeated $n$-periodically, has each vector of $\FF_q^k \setminus \{\bf 0 \}$ as a consecutive subsequence once per period.
\end{enumerate}
\end{proposition}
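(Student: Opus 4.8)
The plan is to prove the seven conditions equivalent by first establishing the two ``bridges'' that carry the real content---the identification of $\CCC$ with a linear-recurrence solution space, and the identification of the shift register $T_{g^\perp}$ with multiplication in a field---and then observing that everything else is a matter of counting and of unwinding definitions. Throughout I would use the standing cardinality count: since $g^\perp(x)$ is the parity-check polynomial of $\CCC$, one has $\dim_{\FF_q}\CCC = \deg g^\perp = k$, so that $\#(\CCC \setminus \{\zero\}) = q^k - 1 = n = \#C$.

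First I would dispatch (i) $\Leftrightarrow$ (ii) and (ii) $\Leftrightarrow$ (iii). For (i) $\Leftrightarrow$ (ii): Proposition~\ref{free-cyclic-code-prop} states exactly that the $C$-action on $\CCC \setminus \{\zero\}$ is free if and only if (ii) holds, and a free action of the order-$n$ group $C$ on the $n$-element set $\CCC \setminus \{\zero\}$ has every orbit of size $n$, hence a single orbit, hence is transitive; so ``free'' and ``simply transitive'' coincide in this situation. For (ii) $\Leftrightarrow$ (iii): since $g^\perp$ is irreducible, $\FF_q[x]/(g^\perp) \cong \FF_{q^k}$, and $g^\perp(x) \mid x^d - 1$ is equivalent to $\bar x^{\,d} = 1$, i.e.\ to $\operatorname{ord}(\bar x) \mid d$. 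Because $g^\perp \mid x^n - 1$ we have $\operatorname{ord}(\bar x) \mid n$, so condition (ii)---that $g^\perp$ divides no $x^d - 1$ for a proper divisor $d$ of $n$---says precisely that $\operatorname{ord}(\bar x)$ divides $n$ but no proper divisor of $n$, which forces $\operatorname{ord}(\bar x) = n$, the definition of primitivity in (iii).

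Next I would set up the shift-register bridge and deduce (iii) $\Leftrightarrow$ (iv). The key routine check (where one must be careful with the coefficient convention in the defining recurrence, since one needs $T_{g^\perp}$ to be invertible with characteristic polynomial $g^\perp$ rather than a non-invertible reversal) is that $T_{g^\perp}$ is the companion map of $g^\perp(x)$: under a suitable $\FF_q$-linear isomorphism $\FF_q^k \cong \FF_q[x]/(g^\perp(x)) \cong \FF_{q^k}$ it becomes multiplication by $\bar x$. Granting this, $\operatorname{ord}(T_{g^\perp}) = \operatorname{ord}(\bar x)$, so (iv) is literally (iii); moreover, since a power of $\bar x$ that fixes a nonzero field element must equal $1$, the map $T_{g^\perp}$ acts \emph{freely} on $\FF_q^k \setminus \{\zero\}$, so every nonzero vector has $T_{g^\perp}$-orbit of the common size $\operatorname{ord}(T_{g^\perp})$, which divides $n$ and equals $n$ exactly when it exhausts $\FF_q^k \setminus \{\zero\}$.

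Finally, (iv) $\Leftrightarrow$ (v) $\Leftrightarrow$ (vi) $\Leftrightarrow$ (vii) would all follow from this uniform-orbit-size observation together with a translation between languages. Condition (v) says the orbit of the nonzero seed ${\bf s}=(0,\dots,0,1)$ has size $n$, which by the previous paragraph is equivalent to $\operatorname{ord}(T_{g^\perp}) = n$, i.e.\ (iv). Condition (vi) is merely (v) reread through the identity that the length-$k$ consecutive windows of the pseudorandom sequence are exactly the iterates $T_{g^\perp}^r({\bf s})$, with $n$-periodicity automatic from $T_{g^\perp}^n = \mathrm{id}$ (which holds since $\bar x^{\,n} = 1$). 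For (vii) I would invoke the second bridge: because $g^\perp$ is the parity-check polynomial of $\CCC$, a vector lies in $\CCC$ if and only if its $n$-periodic extension satisfies the linear recurrence defining $T_{g^\perp}$; hence the codeword $w$ attached to $g(x)$ (nonzero, as $g(x)=(x^n-1)/g^\perp(x) \neq 0$) is a nonzero recurrence sequence whose length-$k$ windows are the $T_{g^\perp}$-iterates of its initial window, and the window-exhaustion property in (vii) is once more equivalent to that orbit having size $n$. The main obstacle, and the only place with genuine content beyond bookkeeping, is verifying these two bridges cleanly: identifying $T_{g^\perp}$ with multiplication by $\bar x$ (getting the characteristic polynomial, and in particular the orientation and invertibility, right from the stated recurrence), and establishing the codeword/recurrence dictionary used for (vii).
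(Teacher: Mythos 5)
Your proposal is correct, and for the equivalences (i)$\Leftrightarrow$(ii)$\Leftrightarrow$(iii)$\Leftrightarrow$(iv) and (v)$\Leftrightarrow$(vi) it matches the paper's argument essentially step for step. It diverges in two places, both defensibly. First, to connect (iv) and (v) you observe that $T_{g^\perp}$, being multiplication by $\bar x$ in the field $\FF_q[x]/(g^\perp(x))$, acts freely on $\FF_q^k\setminus\{\zero\}$ (a power of $\bar x$ fixing a nonzero element equals $1$), so every orbit has size $\operatorname{ord}(T_{g^\perp})$ and the seed orbit exhausts the $n$ nonzero vectors iff that order is $n$. The paper instead proves (v)$\Rightarrow$(iv) by a counting argument and (ii)$\Rightarrow$(v) by passing to $\bar\FF_q$ and arguing that a repeated iterate would force $g^\perp$ to have a root that is a $d$th root of unity for a proper divisor $d$; your route is cleaner, stays over $\FF_q$, and gives the two implications at once. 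Second, for (vi)$\Leftrightarrow$(vii) you invoke a codeword/recurrence dictionary, whereas the paper expands $x^n-1=g^\perp(x)g(x)$ coefficient by coefficient and finds that $w$ is the \emph{reverse} of the LFSR sequence with seed $(0,\dots,0,1)$; this reversal (together with the reversed-coefficient convention in the paper's definition of $T_f$, which you rightly flag) is exactly the orientation issue your ``second bridge'' would have to resolve. It is harmless for the statement being proved --- the property ``every vector of $\FF_q^k\setminus\{\zero\}$ occurs as a consecutive window once per period'' is invariant under reversing the sequence --- but as written your dictionary is off by that reversal, so if you carry out the verification you should either prove the reversed identity the paper proves or note explicitly why reversal does not affect (vii).
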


\begin{example}
\label{dual-Hamming-example}
When $q=3$ and $k=2$, so $n=3^2-1=8$,
there are three degree two monic irreducibles $g^\perp(x)$ in $\FF_3[x]$, each shown
here with $g(x)=\frac{x^8-1}{g^\perp(x)}$ and
its corresponding word $w$ in $\FF_3^8$:

\begin{center}
\begin{tabular}{|c|c|c|}\hline
$g^\perp(x)$ & $g(x)=\sum_{j=1}^n w_j x^{j-1}$ & $w=(w_1,\ldots,w_8)$ \\\hline\hline
$x^2+1$ &$x^6+2x^4+x^2+2$ & $(2,0,1,0,2,0,1,0)$\\\hline
$x^2+x+2$ & $x^6+2x^5+2x^4+2x^2+x+1$ & $(1,1,2,0,2,2,1,0)$\\\hline
$x^2+2x+2$ & $x^6+x^5+2x^4+2x^2+2x+1$ & $(1,2,2,0,2,1,1,0)$\\\hline
\end{tabular}
\end{center}

\vskip.1in
\noindent
The first choice is not primitive,
while the second and third are primitive.
The non-primitive first choice $g^\perp(x)=x^2+1$
has LFSR $L_{g^\perp(x)}: (x_0,x_1) \mapsto (x_1,x_2)$ 
where $x_2=-(0\cdot x_1+1\cdot x_0)=-x_0$.  Starting with seed $(0,1)$, it
has only $4$ different iterates
$$
(0,1) \mapsto (1,0) \mapsto (0,2) \mapsto (2,0) \mapsto (0,1) \mapsto (1,0) \mapsto (0,2) \mapsto (2,0) \,\, (\mapsto (0,1) \mapsto \cdots)
$$
and this pseudorandom sequence 
$
(0,1,0,2,0,1,0,2,0,1,0,2,0,1,\ldots),
$
whose period is $4$, not $n=8$.

The primitive second choice $g^\perp(x)=x^2+x+2$ has
LFSR $L_{g^\perp(x)}: (x_0,x_1) \mapsto (x_1,x_2)$ 
where $x_2=-(1\cdot x_1+2\cdot x_0)=-x_1+x_0$.
Starting with seed $(0,1)$ it has $8$ different iterates (all of $\FF_3^2 \setminus \{\bf 0\})$
$$
(0,1) \mapsto (1,2) \mapsto (2,2) \mapsto (2,0) \mapsto (0,2) \mapsto (2,1) \mapsto (1,1) \mapsto (1,0) \,\, (\mapsto (0,1) \mapsto \cdots)
$$
and pseudorandom sequence 
$
(0,1,2,2,0,2,1,1,0,1,2,2,0,2,1,1,\ldots),
$
whose period is $n=8$.

\end{example}

\begin{proof}
\noindent
{\sf (i) $\Leftrightarrow$ (ii):}
Since both the cyclic group $C$ and $\CCC\setminus \{\bf 0\}$
have $n=q^k-1$ elements, the $C$-action on $\CCC\setminus \{\bf 0\}$ 
is simply transitive if and only if it is free.  Proposition~\ref{free-cyclic-code-prop} then implies the equivalence.
\vskip.1in
\noindent
{\sf (ii) $\Leftrightarrow$ (iii):}
Since $g^\perp(x)$ is an irreducible factor of $x^n-1$, having
$\gcd(g^\perp(x),x^d-1)$ for all proper divisors $d$ of $n$ is
the same as saying $g^\perp(x)$ does not divide $x^d-1$ for
any proper divisor $d$ of $n$.  The latter is the same
as saying $\bar{x}$ has order $n$ inside $\FF_q[x]/(g^\perp(x))$.

\vskip.1in
\noindent
{\sf (iii) $\Leftrightarrow$ (iv):}
The matrix for $T_{g^\perp(x)}$ acting in the standard basis
for $\FF_q^n$ is the transpose of the matrix for multiplication by $\bar{x}$
acting in the ordered basis $(\bar{1},\bar{x},\bar{x}^2,\ldots,\bar{x}^{k-1})$
for $\FF_q[x]/(g^\perp(x))$, that is, the usual {\it companion matrix} for $g^\perp(x)$.  Therefore they have the same multiplicative order.

\vskip.1in
\noindent
{\sf (v) $\Rightarrow$ (iv):}
Since $T_{g^\perp(x)}$ has the same multiplicative order as
multiplication by $\bar{x}$ in $\FF_q[x]/(g^\perp(x))$, and since $g^\perp(x)$
divides $x^n-1$, the latter order divides $n$.  However, if 
the iterates $\left\{ T_{g^\perp}^r (e_k) \right\}_{r=0,1,\ldots,n-1}$ exhaust $\FF_q^k-\setminus \{\bf{0}\}$, then there are $n$ of them,
so $T_{g^\perp}$ has order at least $n$, and hence exactly $n$.

\vskip.1in
\noindent
{\sf (ii) $\Rightarrow$ (v):}  Assume (v) fails, that is, the $n$ iterates $\left\{ T_{g^\perp}^r (e_k) \right\}_{r=0,1,\ldots,n-1}$ do not
exhaust the set $\FF_q^k-\setminus \{\bf{0}\}$ of cardinality $n$, so two of them are equal.  Since
$T_{g^\perp}$ is invertible, this means $T_{g^\perp}^d({\bf x})={\bf x}$
for some ${\bf x} \neq {\bf 0}$ and $1 \leq d < n$.  Thus $T_{g^\perp}$ has
an eigenvalue $\alpha$ in $\bar{\FF}_q$ which is a $d^{th}$ root-of-unity for some proper
divisor $d$ of $n$, and hence its characteristic polynomial $g^\perp(x)$ has
$\alpha$ as a root.  But this would contradict (ii): primitivity of $g^\perp(x)$ implies that any of its
roots $\alpha$ gives rise to an isomorphism
$\FF_q[x]/(g^\perp(x)) \cong \FF_q[\alpha]$ sending $\bar{x} \mapsto \alpha$, so $\alpha$ should have order $n$.

\vskip.1in
\noindent
{\sf (v) $\Leftrightarrow$ (vi):} 
By construction the $n$-periodicity of the pseudorandom sequence comes from the fact that $T_{g^\perp}$ had the same order $n$ as $\bar{x}$.  The rest of (vi) is then a restatement of (v).

\vskip.1in
\noindent
{\sf (vi) $\Leftrightarrow$ (vii):}
We claim that the word $w$ in (vii) is the reverse of the pseudorandom sequence in (vi).  This is because the equation
$$
x^n-1=g^\perp(x) g(x)=\left( x^k+\sum_{i=0}^{k-1}a_i x^i \right)
\left(\sum_{j=1}^{n} w_j x^{j-1} \right)
$$
defining $g(x)$ via $g^\perp(x)$ makes the coefficient of $x^m$ vanish on both sides for $1 \leq m \leq n-1$,
so
$$
w_{m-k+1}=-(a_{k-1} w_{m-k+2} + \cdots + a_{1} w_{m} +a_0 w_{m+1} )=T_{g^\perp}(w_{m+1},w_m,\ldots,w_{m-k+2}).
$$
Also, since $g(x)$ is monic of degree $n-k$, the reverse $(w_n,w_{n-1},\ldots,w_2,w_1)$ of $w$ will start with its initial $k$ terms
being $(w_{n},w_{n-1},\ldots,w_{n-k+2},w_{n-k+1})=(0,0,\ldots,0,1)$.
In other words, this reverse of $w$ is the pseudorandom sequence of length $n$
generated by $T_{g^\perp(x)}$ with seed $(0,0,\ldots,0,1)$.
\end{proof}

\begin{proposition}
\label{simply-transitive-CSP-prop}
 Let $X=\CCC \subset \FF_q^n$ be a $k$-dimensional dual Hamming code, so that $n=q^k-1$,
with generator $g(x)$, and $w$ in $\FF_q^n$ its corresponding word. Then
\begin{enumerate}
    \item[(i)] $(X,X^{\maj}(t),C)$ exhibits the CSP if and only $\gcd(n,\cdes(w))=1$.
    \item[(ii)] In the binary case, $(X,X^{\inv}(t),C)$ exhibits the CSP if and only $\gcd(n,\wt(w))=1$.
\end{enumerate}
\end{proposition}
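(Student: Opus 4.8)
The plan is to recognize the dual Hamming code $\CCC$ as a $C$-stable subset of $\FF_q^n$ whose orbit structure falls directly under the hypotheses of Corollary~\ref{single-free-orbit-corollary}, and then simply to invoke that corollary. The entire content of the proposition is therefore bookkeeping about orbits, deferring all the real work to the earlier congruence analysis of Proposition~\ref{maj-inv-on-free-orbits-prop}.

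First I would pin down the orbit structure using Proposition~\ref{primitive-dual-Hamming-equivalences-prop}. Since $\CCC$ is dual Hamming, its parity check polynomial $g^\perp(x)$ is primitive, which is condition (iii); by the equivalence (i) $\Leftrightarrow$ (iii), the $C$-action on $\CCC \setminus \{\zero\}$ is simply transitive. In particular the action is free, so the $n = q^k - 1$ nonzero codewords constitute a single free $C$-orbit.

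Next I would confirm that this one free orbit accounts for all the non-constant codewords, i.e. that the only constant word in $\CCC$ is $\zero$. Indeed, any constant word $a \cdot (1,1,\ldots,1)$ is fixed by every element of $C$; if such a word were nonzero and lay in $\CCC$, it would be a nontrivial fixed point, contradicting freeness of the action on $\CCC \setminus \{\zero\}$. Hence every nonzero codeword is non-constant, and they all lie in the single free orbit. The word $w$ corresponding under \eqref{codeword-to-polynomial-dictionary} to the generator $g(x)$ is nonzero (as $g(x) \neq 0$), so it is a legitimate representative of this orbit.

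With these observations, $\CCC$ satisfies the hypotheses of Corollary~\ref{single-free-orbit-corollary} verbatim: taking $A = \FF_q$ with any fixed linear order, $\CCC$ is a $C$-stable subset of $A^n$ whose non-constant words form a single free $C$-orbit represented by $w$. Applying parts (i) and (ii) of that corollary gives precisely the two claimed equivalences, with $\gcd(n,\cdes(w))=1$ governing $X^{\maj}(t)$ and, in the binary case, $\gcd(n,\wt(w))=1$ governing $X^{\inv}(t)$. I do not anticipate a genuine obstacle; the only step requiring care is ruling out nonzero constant codewords, which is exactly where freeness of the simply transitive action is used, and checking that the distinguished word $w$ attached to $g(x)$ really is a nonzero (hence non-constant) orbit representative.
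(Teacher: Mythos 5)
Your proposal is correct and follows the same route as the paper: invoke the equivalence of (i) and (iii) in Proposition~\ref{primitive-dual-Hamming-equivalences-prop} to see that the nonzero codewords form a single free $C$-orbit, then apply Corollary~\ref{single-free-orbit-corollary}. The only difference is that you spell out the (true and worth noting) check that no nonzero constant word can lie in $\CCC$, which the paper leaves implicit.
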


\begin{proof}
Combine the equivalence between Proposition~\ref{primitive-dual-Hamming-equivalences-prop} (i) and (iii) with Corollary~\ref{single-free-orbit-corollary}.
\end{proof}

This leads to the main result of this section, whose part (ii) we find surprising.

\begin{theorem}
\label{primitive-polynomial-conjecture}
Fix a positive integer $k$ and prime power $q$,
and let $n:=q^k-1$.
\begin{enumerate}

\item[(i)]
Any nonzero codeword $w$ in a dual Hamming code in $ \CCC \subset \FF_q^n$ has $\cdes(w)= \frac{q-1}{2} \cdot q^{k-1}$. 

\item[(ii)]
If $q\in \{2,3\}$, then a monic degree $k$
irreducible $g^\perp(x)$ in $\FF_q[x]$ is primitive if and only if the word $w$ corresponding to 
$g(x)=\frac{x^{n-1}}{g^\perp(x)}$ under the bijection
\eqref{codeword-to-polynomial-dictionary} has $\cdes(w)=\frac{q-1}{2} \cdot q^{k-1}$.

\item[(iii)] If $q \in \{2,3\}$, then $(X,X^\maj(t),C)$ gives a CSP
for $X=\CCC$ any dual Hamming code.

\item[(iv)] If $q=2$, then $(X,X^\inv(t),C)$ gives a CSP
for $X=\CCC$ any dual Hamming code.
\end{enumerate}
\end{theorem}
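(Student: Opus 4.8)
The plan is to prove (i) by a window-counting argument, obtain (iii) and (iv) as quick consequences of (i) together with the $\gcd$-criteria already in hand, and reserve the real work for the converse direction of (ii).

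For (i), first recall that by Proposition~\ref{primitive-dual-Hamming-equivalences-prop} a dual Hamming code has its $C$-action on $\CCC\setminus\{\zero\}$ simply transitive, so all nonzero codewords form a single orbit and share one value of $\cdes$ (constant on orbits by Proposition~\ref{maj-inv-change-under-shift-prop}(i)); hence it suffices to compute $\cdes(w)$ for the generator word $w$ attached to $g(x)$. By part (vii) of that proposition, the $n$ cyclic length-$k$ windows $(w_i,w_{i+1},\ldots,w_{i+k-1})$ of $w$ run over the $n=q^k-1$ nonzero vectors of $\FF_q^k$, each exactly once. Projecting each window onto its first two coordinates, the multiset of cyclic adjacent pairs $\{(w_i,w_{i+1})\}_{i=1}^n$ therefore realizes every pair $(a,b)\neq(0,0)$ exactly $q^{k-2}$ times (this uses $k\ge 2$). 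Since $\cdes(w)$ counts positions with $w_i>w_{i+1}$, and every such descent pair has $a\neq b$ hence is nonzero, I get $\cdes(w)=\binom{q}{2}q^{k-2}=\frac{q-1}{2}q^{k-1}$, visibly independent of the chosen order on $\FF_q$. The degenerate case $k=1$ (relevant here only for $q=3$) is checked directly.

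For (iii) and (iv), set $V:=\frac{q-1}{2}q^{k-1}$, so $V=2^{k-2}$ when $q=2$ and $V=3^{k-1}$ when $q=3$. In either case $V\mid q^{k-1}$, and since $q^k-1$ is coprime to $q$ we get $\gcd(n,V)=1$. Combining $\cdes(w)=V$ from (i) with Proposition~\ref{simply-transitive-CSP-prop}(i) yields (iii). For (iv), the same window count applied to the first coordinate alone gives $\wt(w)=2^{k-1}$ (the number of nonzero binary $k$-tuples beginning with $1$), and $\gcd(2^k-1,2^{k-1})=1$, so Proposition~\ref{simply-transitive-CSP-prop}(ii) gives the CSP for $\inv$.

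For (ii), the forward implication is exactly (i). I argue the converse contrapositively: suppose $g^\perp(x)$ is irreducible of degree $k$ but not primitive, so $\bar{x}$ has order $e$ a proper divisor of $n$. Then, as in the divisibility analysis of Proposition~\ref{free-cyclic-code-prop}, every nonzero codeword (in particular $w$) has $C$-orbit of size exactly $e$, so its orbit-period $P:=e$ divides $n$ and satisfies $n/P>1$. The key divisibility $\tfrac{n}{P}\mid\cdes(w)$ then drops out of Proposition~\ref{maj-inv-change-under-shift-prop}(i): since $\cdes$ is orbit-constant, telescoping $\sum_{j=0}^{P-1}\bigl(\maj(c^{j+1}(w))-\maj(c^{j}(w))\bigr)=\maj(c^P(w))-\maj(w)=0$ gives $P\cdot\cdes(w)=n\cdot m$ for a nonnegative integer $m$, whence $\cdes(w)=\tfrac{n}{P}\,m$. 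Choosing any prime $p\mid\tfrac{n}{P}$, we get $p\mid n$ and $p\mid\cdes(w)$; but $\gcd(n,V)=1$ forces $p\nmid V$, so $\cdes(w)\neq V$, completing the converse.

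The hard part is this converse of (ii): one must notice that $\cdes(w)$ is automatically a multiple of $n/P$ for any word of orbit-period $P$, and that $V=\frac{q-1}{2}q^{k-1}$ is coprime to $n$ \emph{precisely} when $q\in\{2,3\}$ — for any other $q$ the factor $\frac{q-1}{2}$ shares a prime with $q^k-1$, and it is exactly this coprimality that lets the descent count detect primitivity. The only auxiliary input needed is the standard fact (implicit in Proposition~\ref{free-cyclic-code-prop}) that for an irreducible feedback polynomial every nonzero codeword has orbit size equal to the multiplicative order of $\bar{x}$.
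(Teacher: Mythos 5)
Your proposal is correct and follows essentially the same route as the paper: the window-counting argument for (i), the coprimality of $\frac{q-1}{2}q^{k-1}$ (resp.\ $2^{k-1}$) with $n$ for (iii) and (iv), and the key divisibility $\frac{n}{d}\mid\cdes(w)$ for the converse of (ii). The only cosmetic difference is that you obtain that divisibility by telescoping the $\maj$ recursion of Proposition~\ref{maj-inv-change-under-shift-prop}(i) around the orbit, whereas the paper reads it off directly from the fact that a $d$-periodic $w$ is a concatenation of $n/d$ identical blocks; both are valid.
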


\begin{proof} 
For (i), note that part (i) of Proposition~\ref{primitive-dual-Hamming-equivalences-prop} shows that all nonzero words $w$ in $\CCC$
lie in the same $C$-orbit, while part (iv) of the same proposition implies that
the $n$-periodic extension of $w$ contains
every vector in $\FF_q^k \setminus \{\bf 0\}$ exactly once as a consecutive
subsequence each period.  Consequently, every possible pair 
$(w_{i-1},w_{i})$ (with subscripts taken modulo $n$) contributing to $\cdes(w)$ has its location uniquely determined within an $n$-period once
we 
\begin{itemize}
    \item 
choose the values $w_{i-1} > w_{i}$ in $\binom{q}{2}$ ways, and then
\item complete the length $k$ subsequence preceding it as $(w_{i-k+1},\ldots,w_{i-2},w_{i-1},w_i)$ by choosing the preceding $k-2$ entries arbitrarily in $q^{k-2}$ ways; this is not ${\bf 0}$ in $\FF_q^k$ since $w_{i-1} > w_i$.
\end{itemize}
Thus $\cdes(w)=\binom{q}{2} \cdot q^{k-2}=\frac{q-1}{2} \cdot q^{k-1}$.

For (ii), (iii), the crux is that if $q \in \{2,3\}$, then $\frac{(q-1)}{2} q^{k-1}$
is a $q$-power, so $\gcd\left(\frac{(q-1)}{2} q^{k-1},n\right)=1$.  

To deduce (iii), apply Proposition~\ref{simply-transitive-CSP-prop} to assertion (i) here.

To deduce (ii), assume $q \in \{2,3\}$ and 
$\cdes(w)=\frac{(q-1)}{2}q^{k-1}$.  We know that in $\FF_q[x]/(g^\perp(x))$, the element $\bar{x}$ 
has some multiplicative order $d$ dividing $n=q^k-1$, and want to show $d=n$.
Since the LFSR $T_{g^\perp(x)}: \FF_q^k \rightarrow \FF_q^k$ also has
order $d$, the word $w$ will be $d$-periodic, consisting of
$\frac{n}{d}$ repeats of some word of length $d$.  Hence  
$\frac{n}{d}$ divides $\cdes(w)=\frac{(q-1)}{2}q^{k-1}$.  Since
$\frac{n}{d}$ also divides $n$, it divides $\gcd(\frac{(q-1)}{2} q^{k-1},n)=1$.
Hence $d=n$ as desired.

To deduce (iv), Proposition~\ref{simply-transitive-CSP-prop} applies
once we compute the Hamming weight $\wt(w)$.  As
the $n$-periodic extension of $w$ has every binary sequence in $\FF_2^k \setminus \{\bf 0\}$ occurring exactly once consecutively in a period,
this implies $\wt(w)=2^{k-1}$, and hence $\gcd(n,\wt(w))=\gcd(2^k-1,2^{k-1})=1$, as desired.
\end{proof}

\begin{example}
The assertion of Theorem~\ref{primitive-polynomial-conjecture}(ii) fails for $q=5$ at $k=3$. The cubic irreducible 
$g^\perp(x) = 1 + x + x^3$ in $\FF[x]$ is not primitive, since $\bar{x}$ has order $d=62$ in $\FF_5[x]/(g^\perp(x))$, rather than $n=5^3-1=124$. However, one can check that 
the word $w$ corresponding to
$g(x)=\frac{x^{124}-1}{g^\perp(x)}$ still has $\cdes(w)=50=\frac{5-1}{2}\cdot 5^{3-1}$.  Likewise 
the assertion fails for $q=7$ at $k=2$.  
The irreducible quadratic
$g^\perp(x) = 6 + x + x^2$ in $\FF_7[x]$ is not primitive, as $\bar{x}$ has order $16$
in $\FF_7[x]/(g^\perp(x))$,
not $n=7^2-1=48$, but one can check that the word $w$ corresponding to $g(x)$ has 
$\cdes(w)=21=\frac{7-1}{2} \cdot 7^{2-1}$.

One can also check that the assertion of Theorem~\ref{primitive-polynomial-conjecture}(iv)
fails for $q=3$ at $k=2$, such as in Example~\ref{dual-Hamming-example} with the choice of primitive polynomial $g^\perp(x)=x^2+x+2$ as the parity check for a dual Hamming code $X=\CCC$:  no matter how one orders the alphabet $\FF_3=\{0,1,2\}$
to  define $\inv$ in $X^{\inv} (t)$, the triple $(X,X^\inv(t),C)$ does not exhibit the CSP.
\end{example}

\section{Questions}
We close with some questions that we have not seriously explored.

\begin{question} \rm
    Can one characterize the dual Hamming codes $X=\CCC$
    for which $(X,X^\maj(t),C)$ or $(X,X^\inv(t),C)$ exhibits a CSP? To what extent does this depend upon the choice of primitive polynomial parity check polynomial $g^\perp(x)$ and/or the linear ordering of $\FF_q$ used to define $\maj, \inv$?
\end{question}

\begin{question} \rm
Do other cyclic codes (e.g., 
Reed-Solomon, BCH, Golay) exhibit interesting CSPs?
\end{question}

\end{document}